\newtheorem{theorem}{Theorem}[section]
\newtheorem{lemma}[theorem]{Lemma}
\newtheorem{conjecture}[theorem]{Conjecture}
\theoremstyle{definition}
\newtheorem*{theorem*}{Theorem}
\newtheorem*{proposition*}{Proposition}
\newtheorem*{lemma*}{Lemma}
\theoremstyle{remark}
\newtheorem*{remark}{Remark}
\newtheorem*{note}{Note}
\numberwithin{equation}{section}
\def\al{{\alpha}}
\def\be{{\beta}}
\def\eps{{\epsilon}}
\def\ld{{\lambda}}
\def\HH{{\mathbb H}}
\def\Ak1{{\mathbb A_k ^1}}
\def\RR{{\mathbb R}}
\def\ZZ{{\mathbb Z}}
\newcommand*{\defeq}{\mathrel{\rlap{%
                     \raisebox{0.3ex}{$\m@th\cdot$}}%
                     \raisebox{-0.3ex}{$\m@th\cdot$}}%
                     =}
\def\cal{\mathcal}
\def\cC{{\cal C}}
\def\cS{{\mathfrak S}}
\def\SL{\operatorname{SL}}
\def\del{\delta}
\def\geom{\operatorname{geom}}
\def\PSL{\operatorname{PSL}}
\newcommand{\norm}[1]{\left\lVert#1\right\rVert}
\newcommand*{\shom}{\mathscr{H}\kern -.5pt om}
\begin{document}

\title[Probability laws for the distribution of geometric lengths]{Probability laws for the distribution of geometric lengths when sampling by a random walk in a Fuchsian fundamental group}

\date{\today}

\author[Peter S. Park]{Peter S. Park}
\address{Department of Mathematics, Harvard University, Cambridge, MA 02138}
\email{\href{mailto:pspark@math.harvard.edu}{{\tt pspark@math.harvard.edu}}}

\maketitle

\begin{abstract}
Let $S=\Gamma\backslash \mathbb{H}$ be a hyperbolic surface of finite topological type, such that the Fuchsian group $\Gamma \le \operatorname{PSL}_2(\mathbb{R})$ is non-elementary, and consider any generating set $\mathfrak S$ of $\Gamma$. When sampling by an $n$-step random walk in $\pi_1(S) \cong \Gamma$ with each step given by an element in $\mathfrak S$, the subset of this sampled set comprised of hyperbolic elements approaches full measure as $n\to \infty$, and for this subset, the distribution of geometric lengths obeys a Law of Large Numbers, Central Limit Theorem, Large Deviations Principle, and Local Limit Theorem. We give a proof of this known theorem using Gromov's theorem on translation lengths of Gromov-hyperbolic groups.
 \end{abstract}
 
\section{Introduction}\label{sec:intro}
Let $S=\Gamma\backslash \HH$ be a hyperbolic surface of finite topological type, where $\Gamma \le \PSL_2(\RR)$ is a Fuchsian group that acts on $\HH$ by fractional linear transformations. By the assumption of finite topological type, the fundamental group $\pi_1(S)\cong \Gamma$ is finitely presented, and in particular, finitely generated. Fix a finite \emph{spanning set} $\cS$ of $\Gamma$, i.e., a subset whose multiplicative closure is equal to all of $\Gamma$. Then, $g\in \Gamma$ has a  notion of \emph{algebraic length}, defined by 
\[
\ell_\cS(g) \defeq \inf_{
\substack{k \in \ZZ_{\ge 0} : \exists e_1, \ldots, e_k \in \cS\\  g = e_1 \cdots e_k } } k.
\]
If $g \in \Gamma$ is hyperbolic, then it also has a notion of \emph{geometric length} defined as follows. In the bijective correspondence between conjugacy classes of $\pi_1(S)$ and free homotopy classes of loops in $S$, the hyperbolic conjugacy classes precisely correspond to free homotopy classes of loops with a unique  representative that is geodesic with respect to the hyperbolic metric of $S$. This gives the definition of the geometric length $\geom(g)$ of $g$: the length of the geodesic representative of the free homotopy class of loops corresponding to the conjugacy class of $g$. 

In the absence of a straightforward formula for the geometric length of a given word---such as the Pythagorean Theorem for the unit square torus with the standard fundamental group generators---a general question naturally arises:

\hspace{1pt}

\begin{center}
\emph{What can we say about the relationship between}

\emph{the algebraic length and the geometric length?}
\end{center}

\hspace{1pt}

\noindent The simplest case in our setting is when $S$ is a pair of pants, i.e., $S^2$ minus three disjoint open disks endowed with a hyperbolic metric that makes the three boundary components, which we denote by $B_1,B_2,$ and $B_3$, geodesics. The hyperbolic metric can uniquely be described by specifying the geometric lengths of  $B_1,B_2,$ and $B_3$. Note that $\pi_1(S)$ is isomorphic to the free group $F_2$ on two generators, and we can choose the two free generators $X$ and $Y$ to be loops around $B_1$ and $B_2$ respectively, so that $XY$ is a loop around $B_3$. In the case that $\cS=\{X,Y,X^{-1},Y^{-1}\}$, Chas--Li--Maskit~\cite{chasconj} conjectured from computational evidence the following correlative relationship between algebraic and geometric length.

\begin{conjecture}[Chas--Li--Maskit]\label{conj:chas}
Let $S$ be the pair of pants such that $B_1, B_2,$ and $B_3$ have geometric lengths $\ell_1,\ell_2,$ and $\ell_3$. Let $\mu_n$ denote the uniform random distribution on the set $R_n$ of cyclic reduced $\cS$-words of algebraic length $n$. There exist positive constants $\kappa=\kappa(\ell_1,\ell_2,\ell_3)$ and $\nu=\nu(\ell_1,\ell_2,\ell_3)$ such that for any $a<b$, 
\[
\lim_{n\to \infty}\int_{R_n} \chi_{[a,b]} \left(\frac{\geom(w)-\kappa n}{\sqrt{n}}\right)d \mu_{n}(w)=\int_a^b \frac {e^{-\frac{s^2}{2\nu}}}{\sqrt{2\pi\nu}} ds.
\]
\end{conjecture}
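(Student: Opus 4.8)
The plan is to realize the geometric length as a Birkhoff sum of a Hölder potential over a subshift of finite type and then invoke the central limit theorem of thermodynamic formalism. First I would record that, because the boundary curves of $S$ are geodesics, every nontrivial $w\in\Gamma\cong F_2$ is hyperbolic, so $\geom(w)$ is defined for all $w\in R_n$ and equals the hyperbolic translation length of $w$ acting on $\HH$; concretely, for a lift $\rho(w)\in\SL_2(\RR)$ one has $\geom(w)=2\log\lambda(w)$ with $\lambda(w)\ge 1$ the larger eigenvalue. Next I would set up the symbolic model: cyclically reduced $\cS$-words of length $n$ are exactly the periodic orbits of period $n$ of the subshift of finite type $\Sigma$ on the alphabet $\cS=\{X,Y,X^{-1},Y^{-1}\}$ whose only forbidden transitions are the cancellations $aa^{-1}$. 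Letting $\sigma$ be the shift, the geometric input is that $\Gamma$ acts on $\partial\HH$ with contracting (north--south) dynamics compatible with reduction, so the attracting fixed point $\xi_w^{+}\in\partial\HH$ of $\rho(w)$ depends on the word only through an exponentially decaying dependence on its tail. This lets me define the boundary log-derivative (Busemann) cocycle $f\colon\Sigma\to\RR$, which is Hölder continuous (after the standard reduction to one-sided dependence), and whose period over the closed orbit of $w$ is exactly the translation length: $\geom(w)=S_nf(\underline w)$, where $S_nf=\sum_{i=0}^{n-1}f\circ\sigma^i$.

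With geometric length expressed as a Birkhoff sum, the problem becomes a statement about the statistics of $S_nf$ under the distribution that $\mu_n$ induces on periodic orbits. The second step is to identify this distribution: uniform weighting of the $\#R_n$ cyclic words of length $n$ is, up to controlled error, the equidistribution of periodic $\sigma$-orbits, which converges to the measure of maximal entropy (Parry measure) $m$ on $\Sigma$, the equilibrium state of the zero potential. The third step is then purely dynamical: for a Hölder potential $f$ and the Gibbs measure $m$, the transfer (Ruelle--Perron--Frobenius) operator has a spectral gap, and the standard Nagaev--Guivarc'h perturbation argument yields
\[
\frac{S_nf-\kappa n}{\sqrt{n}}\;\Longrightarrow\;N(0,\nu),\qquad \kappa=\int_\Sigma f\,dm,\quad \nu=\sum_{k\in\ZZ}\int_\Sigma (f-\kappa)\big((f-\kappa)\circ\sigma^k\big)\,dm,
\]
provided $\nu>0$. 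Unwinding the identification of $m$ with $\lim\mu_n$ gives exactly the Gaussian limit law asserted in the conjecture, with $\kappa$ and $\nu$ depending on $\ell_1,\ell_2,\ell_3$ through $\rho$.

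The main obstacle is twofold. First, $\mu_n$ is the uniform measure on finite words, not literally the Parry measure, so I must justify passing from counting cyclic words to the equilibrium state: this requires the delicate comparison showing that the orbit counts are asymptotically governed by $m$ on finite windows and that sampling closed geodesics rather than $m$-typical points does not perturb the $\sqrt{n}$-scale fluctuations, using that $f$ is Hölder and that the leading spectral data of the transfer operator control both quantities. Second, and more essentially, one must prove non-degeneracy $\nu>0$: since coboundaries have zero period over every closed orbit, $\nu=0$ would force $f$ cohomologous to $\kappa$ and hence $\geom(w)=\kappa n$ for \emph{every} cyclically reduced $w$ of length $n$. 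But testing the families $X^k$, $Y^k$, $(XY)^k$ gives ratios $\geom(X^k)/k=\ell_1$, $\geom(Y^k)/k=\ell_2$, $\geom((XY)^k)/(2k)=\ell_3/2$, which cannot all equal a single $\kappa$ for a genuine hyperbolic structure; pushing this through more words rules out $\nu=0$ outright, so the non-degeneracy is where the actual hyperbolic geometry of $S$ enters.

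Finally, I would note an equivalent route that matches the method advertised in the abstract and bypasses explicit thermodynamic formalism: by Gromov's theorem on translation lengths in Gromov-hyperbolic groups, the geometric length is, up to a uniformly bounded comparison, a \emph{bicombable} function on the hyperbolic group $\Gamma$. The central limit theorem for bicombable functions under uniform-word sampling then applies directly, with Gromov's theorem supplying precisely the uniform control relating translation length in the word metric to the geometric length that makes the combable structure, and hence the CLT, available.
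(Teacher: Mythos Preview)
The paper does not prove this statement: Conjecture~1.1 is presented as an open conjecture of Chas--Li--Maskit and is \emph{not} established in the paper. What the paper proves is Theorem~1.2 (a special case of Theorem~1.3), the analogue in which one samples uniformly from \emph{all} $\cS$-words of symbolic length $n$ rather than from cyclically reduced words of algebraic length $n$. That change is exactly what allows the paper's method to work: uniform sampling from all words of length $n$ is an i.i.d.\ random walk on $\Gamma\subset\PSL_2(\RR)$, to which the Furstenberg--Le~Page--Guivarc'h--Benoist--Quint CLT for $\log\norm{g}$ applies once the standard hypotheses are checked. Gromov's translation-length theorem is used only in a subsidiary role (Lemma~3.2), to show that parabolic elements occupy asymptotically negligible measure; it is not used as a combing/CLT engine. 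The paper explicitly notes that Conjecture~1.1 itself has since been proved elsewhere.

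Your sketch, by contrast, is a plausible outline for the conjecture proper and is essentially the thermodynamic-formalism route taken in the literature (Bowen--Series type coding, H\"older Busemann cocycle, spectral gap for the transfer operator, CLT over periodic orbits). The two genuine gaps you flag are real: (i) passing from the uniform measure on length-$n$ cyclic reduced words to the Parry measure at the $\sqrt{n}$ scale requires a periodic-orbit CLT (not just equidistribution), and (ii) nondegeneracy $\nu>0$ needs a full argument that the Busemann potential is not cohomologous to a constant---your test on $X^k,Y^k,(XY)^k$ alone does not exclude the special case $\ell_1=\ell_2=\ell_3/2$, so one must invoke further words or a length-spectrum rigidity argument. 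Finally, your last paragraph misreads the abstract: the ``known theorem'' proved via Gromov's result is the random-walk theorem (Theorem~1.3), not the reduced-word conjecture, and the paper's use of Gromov is to bound parabolics, not to produce a bicombable CLT.
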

Indeed, since all conjugacy classes of a uniformizing Fuchsian group of a pair of pants are hyperbolic, they have a well-defined notion of geometric length. In the above, \emph{$\cS$-words} are symbolic expressions in the elements of $\cS$, \emph{cyclic $\cS$-words} are equivalence classes of $\cS$-words up to cyclic conjugation, and a word or cyclic word is \emph{reduced} if and only if no adjacent pairs of elements are inverses (with the caveat that for cyclic words, the first and last $\cS$-elements are considered adjacent). Cyclic reduced words are useful because they are in bijective correspondence with conjugacy classes; for instance, our earlier work~\cite{park} asymptotically computes the growth of conjugacy classes of commutators in free groups and free products of two finite groups by using this bijective correspondence with cyclic reduced words. However, we note that since the proportion of length-$n$ reduced $\cS$-words that have the maximum possible $n$ cyclic conjugates approaches $1$ as $n\to \infty$, Conjecture~\ref{conj:chas} is equivalent to the statement that an analogous Central Limit Theorem (CLT) type theorem holds for reduced $\cS$-words instead of cyclic reduced $\cS$-words.

Conjecture~\ref{conj:chas} desires a CLT-type theorem for the distribution of a geometric quantity of loops when sampling by a different, but related algebraic quantity. A theorem of this  type was proven by Chas--Lalley~\cite{chasthm}, who proved that for $S$ compact, a CLT-type theorem holds for the distribution of self-intersection numbers of loops when sampling by algebraic length, a phenomenon that also was previously suggested by computational evidence. 

In this paper, we prove a theorem of a similar spirit that demonstrates an analogue of Conjecture~\ref{conj:chas} sampling by \emph{symbolic length} rather than algebraic length. The symbolic length of a $\cS$-word is defined by the number of elements of $\cS$ in the expression, counted with multiplicity. Note that unlike algebraic length, symbolic length is not well-defined on $\Gamma$; it is only well-defined on $\cS$-words, some of which may be equal as group elements in $\Gamma$. However, for reduced $\cS$-words in the free fundamental group of a pair of pants, symbolic length and algebraic length coincide. Thus, the following theorem can also be thought of as an analogue of Conjecture~\ref{conj:chas} regarding \emph{all} words rather than just reduced words.

\begin{theorem}\label{thm:allwords}
Let $S$ be the pair of pants such that $B_1, B_2,$ and $B_3$ have geometric lengths $\ell_1,\ell_2,$ and $\ell_3$. Let $\mu_n$ denote the uniform random distribution on the set $W_n$ of   $\cS$-words of symbolic length $n$. There exist positive constants $\kappa=\kappa(\ell_1,\ell_2,\ell_3)$ and $\nu=\nu(\ell_1,\ell_2,\ell_3)$ such that for any $a<b$, 
\[
\lim_{n\to \infty}\int_{W_n} \chi_{[a,b]} \left(\frac{\geom(w)-\kappa n}{\sqrt{n}}\right)d \mu_{n}(w)=\int_a^b \frac {e^{-\frac{s^2}{2\nu}}}{\sqrt{2\pi\nu}} ds.
\]
\end{theorem}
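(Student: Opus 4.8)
The plan is to recognize the uniform measure $\mu_n$ on $W_n$ as the law of the $n$-step random walk $W_n=g_1\cdots g_n$ with $g_i$ i.i.d.\ uniform on $\cS$, and to reduce the statement to a central limit theorem for the \emph{displacement} $d(o,W_n o)$ of the orbit of a basepoint $o\in\HH$. The bridge between the two is the identity $\geom(g)=\tau(g)$, where $\tau(g)\defeq\inf_{x\in\HH}d(x,gx)$ is the translation length of $g$ acting on $\HH$; this is exactly the defining property of the geodesic representative (and for the pair of pants every nontrivial element is hyperbolic, while $\mu_n(\{1\})\to 0$, so $\geom(W_n)$ is defined off a negligible set). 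Because the pair-of-pants group $\Gamma\cong F_2$ is convex cocompact, the orbit map $F_2\to\HH$, $g\mapsto go$, is a quasi-isometric embedding sending reduced words to uniform quasi-geodesics. First I would invoke Gromov's translation-length estimate in a $\delta$-hyperbolic space: there is a constant $C$ with $\bigl|\tau(g)-\bigl(d(o,go)-2\,d(o,\mathrm{Axis}(g))\bigr)\bigr|\le C$ for every hyperbolic $g$, and for a \emph{cyclically reduced} word the basepoint lies within $O(1)$ of the axis, so that $\geom(g)=\tau(g)=d(o,go)+O(1)$ uniformly.

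Second I would control the passage from the displacement to the geometric length. Since $d(o,W_n o)$ depends only on the group element $\overline{W_n}$, free reduction costs nothing; the only genuine change comes from cyclic reduction, and under $\mu_n$ the number of letters deleted in passing from $\overline{W_n}$ to its cyclic reduction $c_n$ is $O_p(1)$ with exponentially decaying tails, a standard consequence of the nearest-neighbour structure of the walk on the tree. Using $\tau(W_n)=\tau(c_n)$ (conjugacy invariance) and the estimate above, I get $\geom(W_n)=d(o,W_n o)+R_n$ with $\mathbb E[R_n^2]$ bounded in $n$, so $R_n/\sqrt n\to 0$ in probability; by Slutsky it suffices to prove the CLT for the displacement cocycle $d(o,W_n o)$.

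Third I would establish the displacement CLT through the regeneration structure of the free-group walk. There is an a.s.\ infinite increasing sequence of \emph{cut times} at which the current reduced word becomes a permanent prefix; these partition the trajectory into i.i.d.\ blocks $b_1,b_2,\dots$ of random time-length $T_1,T_2,\dots$, and the number $K_n$ of completed blocks by time $n$ satisfies a renewal law $K_n\sim n/\mathbb E[T_1]$. Using the quasi-isometric embedding together with hyperbolicity, the $\HH$-geodesic from $o$ to $W_n o$ fellow-travels the concatenation of the block geodesics, so that $d(o,W_n o)=\sum_{i=1}^{K_n}Z_i+O(1)$, where $Z_i$ is the (i.i.d.) net hyperbolic contribution of block $b_i$, junction defects included. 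A Markov-renewal/Anscombe central limit theorem for the i.i.d.\ sum with renewal count then yields $\bigl(d(o,W_n o)-\kappa n\bigr)/\sqrt n\Rightarrow N(0,\nu)$, with $\kappa=\mathbb E[Z_1]/\mathbb E[T_1]$ the hyperbolic drift and $\nu$ the associated asymptotic variance. Combining with the second step gives the claimed convergence for $\geom(W_n)$.

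The main obstacle is the joint analysis in the third step: reconciling the conjugacy-invariant, inherently non-additive quantity $\geom$ with the additive displacement cocycle, and showing the block contributions are genuinely i.i.d.\ up to a uniformly bounded error. This is exactly where Gromov's translation-length estimate is indispensable, as it converts the global translation length into a sum of local, independent increments. Two further points need care: positivity of the drift $\kappa$, which follows from non-elementariness (the walk escapes to $\partial\HH$ at positive speed), and non-degeneracy $\nu>0$, which requires that the increments $Z_i$ are not a.s.\ constant. The latter I would obtain by exhibiting words of equal symbolic length but distinct geometric length---equivalently, noting that the length spectrum of $S$ is not concentrated at a single value---so that $\geom$ is not cohomologous to a linear function of $n$.
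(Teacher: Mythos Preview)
Your approach is sound in outline but takes a genuinely different route from the paper. The paper deduces Theorem~\ref{thm:allwords} as a specialization of its Theorem~\ref{thm:main} (valid for arbitrary non-elementary Fuchsian $\Gamma$), and that more general result is obtained by invoking the CLT for $\log\norm{g_n\cdots g_1}$ for products of i.i.d.\ random matrices in $\SL_2(\RR)$ (Le~Page, Guivarc'h--Raugi, Benoist--Quint). One verifies the standing hypotheses---finite exponential moment and a strongly irreducible, unbounded semigroup---for the uniform measure on $\cS\subset\SL_2(\RR)$, which is a two-line argument using that two non-commuting hyperbolic elements have four distinct eigenlines, and then reads off the CLT for $\log\norm{g}$. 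The passage to $\geom$ is immediate since for hyperbolic $g$ the geometric length is tied to the spectral data of the matrix. Gromov's translation-length positivity enters the paper only to dispose of parabolics in the general case; for the pair of pants there are none and that step is vacuous. The paper never touches displacement cocycles, regeneration, cut times, or the tree structure of $F_2$.

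Your geometric route can be completed, but one claim in the third step needs tightening: the block contributions $Z_i$ are not literally i.i.d. The regeneration blocks $b_i$ form a Markov chain rather than an i.i.d.\ sequence---the first step after a cut time is conditioned not to cancel the last letter before it, so the law of $b_{i+1}$ depends on the terminal letter of $b_i$---and the junction Gromov product $(b_i^{-1}o\mid b_{i+1}o)_o$ that you fold into $Z_i$ couples adjacent blocks as well. Because the generators $X,Y$ have different translation lengths in $\HH$, this dependence is not washed out by symmetry. The honest statement is that $(Z_i)$ is a functional of a positive-recurrent finite-state Markov chain on the alphabet $\cS$ (equivalently, stationary $1$-dependent after discarding the first block); the Markov-additive CLT combined with Anscombe then applies and your argument goes through. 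The payoff of your approach is self-containment and geometric transparency; the paper's black-box approach is shorter and applies uniformly to all non-elementary Fuchsian groups, including those with parabolics and those that are not virtually free, where the convex-cocompact quasi-isometry you rely on is unavailable.
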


The above theorem is a special case of the following.

\begin{theorem}\label{thm:main}
Let $S=\Gamma \backslash \HH$ be a hyperbolic surface of finite topological type, such that the Fuchsian group $\Gamma \le \PSL_2(\RR)$ is non-elementary. Then, for any spanning set $\cS$ of $\Gamma$ and  any  probability measure $\mu$ on $\cS$ (with support equal to $\cS$), the $n$th convolution power $\mu^{*n}$ on the set $W_n$ of $\cS$-words of symbolic length $n$ satisfies the following:
\begin{enumerate}
\item Let $H_n \subseteq W_n$ denote the subset comprised of elements that are hyperbolic in $\Gamma$. We have that as $n\to \infty$, the measure of $W_n \setminus H_n$ limits to $0$.

\item There exist positive constants $\kappa=\kappa(\Gamma,\cS,\mu)$ and $\nu=\nu(\Gamma,\cS,\mu)$  such that for any bounded, continuous function $\psi$ on $\RR$, we have
\[
\lim_{n\to \infty}\int_{H_n} \psi \left(\frac{\geom(w)-\kappa n}{\sqrt{n}}\right)d \mu^{*n}(w)=\int_\RR \psi(s) \frac {e^{-\frac{s^2}{2\nu}}}{\sqrt{2\pi\nu}} ds.
\]
\end{enumerate}
\end{theorem}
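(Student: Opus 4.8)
The plan is to realize the geometric length as a translation length for the action of $\Gamma$ on $\HH$ and then to import the machinery of random walks on groups acting on hyperbolic spaces. For a hyperbolic $g\in\Gamma\le\PSL_2(\RR)$, its geometric length equals the translation length $\ell(g)=\inf_{x\in\HH}d_\HH(x,gx)$, the displacement measured along its axis; since $\HH$ is $\mathrm{CAT}(-1)$ this infimum is attained and agrees with the stable translation length $\lim_k d_\HH(o,g^k o)/k$. After this identification, Theorem~\ref{thm:main} becomes a statement about the translation lengths of the random walk $w_n=g_1\cdots g_n$, where $g_1,g_2,\ldots$ are i.i.d.\ samples from $(\cS,\mu)$ and $\mu^{*n}$ is the induced law on $W_n$; because $\geom(w)$ depends only on the image of the word in $\Gamma$, the integral in the theorem is exactly the expectation of $\psi\bigl((\geom(w_n)-\kappa n)/\sqrt n\bigr)$ under this walk.

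First I would establish part (1). Because $\Gamma$ is non-elementary and $\mu$ has full (finite) support, the walk converges almost surely to the Gromov boundary $\partial\HH$, and the probability that $w_n$ is loxodromic (hence hyperbolic in $\Gamma$) tends to $1$, indeed exponentially fast, by the results of Maher and of Maher--Tiozzo on random walks on weakly hyperbolic groups. This gives $\mu^{*n}(W_n\setminus H_n)\to 0$, so that, since $\psi$ is bounded, restricting the integral in part (2) to $H_n$ rather than $W_n$ does not affect the limit, and $\geom(w)$ is defined throughout $H_n$.

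For part (2), I would first invoke the Central Limit Theorem for the \emph{displacement} $d_\HH(o,w_n o)$: under our hypotheses (finite support, hence all exponential moments, together with non-elementarity) there exist $\kappa>0$ and $\nu>0$ with
\[
\frac{d_\HH(o,w_n o)-\kappa n}{\sqrt n}\ \xrightarrow{\;d\;}\ \mathcal N(0,\nu),
\]
by the work of Benoist--Quint (positivity of the drift $\kappa$ is the linear escape rate, and non-degeneracy $\nu>0$ follows from non-elementarity). It then remains to transfer this CLT from displacement to translation length, which is where Gromov's comparison for isometries of a $\del$-hyperbolic space enters: displacement, translation length, and Gromov product are coarsely related by
\[
d_\HH(o,w_n o)=\geom(w_n)+2\,(w_n^{-1}o\mid w_n o)_o+O(\del),
\]
with $O(\del)$ a universal constant since $\del$ is fixed for $\HH$. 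The point is that $0\le d_\HH(o,w_n o)-\geom(w_n)$ is controlled, up to this constant, by the Gromov product $(w_n^{-1}o\mid w_n o)_o$, which measures the overlap at $o$ between the forward trajectory (converging to a boundary point $\xi^+$) and the reversed trajectory (converging to a point $\check\xi$). Since $\xi^+\neq\check\xi$ almost surely, one has $(w_n^{-1}o\mid w_n o)_o\to(\check\xi\mid\xi^+)_o<\infty$ almost surely, so the family is tight. Consequently $d_\HH(o,w_n o)-\geom(w_n)=O_{\mathbb P}(1)=o(\sqrt n)$, and subtracting this negligible term from the displacement CLT yields
\[
\frac{\geom(w_n)-\kappa n}{\sqrt n}\ \xrightarrow{\;d\;}\ \mathcal N(0,\nu),
\]
which, together with part (1), is precisely the convergence of $\int_{H_n}\psi\,d\mu^{*n}$ asserted in part (2).

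The main obstacle is this final comparison step: making rigorous that the discrepancy between displacement and translation length is negligible at the scale $\sqrt n$. It requires both the coarse geometric input (Gromov's translation-length estimate relating the two via the Gromov product) and the probabilistic input that $(w_n^{-1}o\mid w_n o)_o$ stays tight, i.e.\ that the bilateral walk separates at the basepoint so that the axis of $w_n$ does not drift away from $o$. This separation is exactly where non-elementarity and the almost-sure distinctness of the forward and backward boundary limits are essential; without it the two notions of length could differ on the very scale of the fluctuations one is trying to resolve.
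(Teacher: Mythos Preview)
Your overall strategy is sound and genuinely different from the paper's. The paper works entirely in the linear-group picture: it lifts $\Gamma$ to $\SL_2(\RR)$, invokes the Benoist--Quint CLT for $\log\norm{g}$, and for part (1) gives a hands-on argument showing that any parabolic word of symbolic length $n$ satisfies $\log\norm{g}\le C\log n$ (this uses Gromov's positive translation-length theorem for \emph{word}-hyperbolic groups, applied to the finitely many primitive cusp classes), so that the CLT and positivity of $\lambda_1$ force the parabolic set to be negligible. You instead work on the $\HH$-side throughout: Maher--Tiozzo for part (1), the displacement CLT for part (2), and a geometric transfer from displacement to translation length. Your route is more general (it would run for any non-elementary action on a hyperbolic space) and is essentially the ``known'' proof the paper's Note points to; the paper's route is more self-contained within the $\SL_2$ framework and makes the role of the cusps explicit.

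There is, however, a real gap in your tightness step. You claim $w_n^{-1}o\to\check\xi$ almost surely, but this is false: since $w_{n+1}^{-1}=g_{n+1}^{-1}w_n^{-1}$, the sequence $w_n^{-1}$ is a \emph{left} random walk, and $w_n^{-1}o$ converges only in distribution (to the $\check\mu$-stationary measure on $\partial\HH$), not almost surely. So the sentence ``$(w_n^{-1}o\mid w_no)_o\to(\check\xi\mid\xi^+)_o$ a.s.'' does not stand as written, and convergence in distribution of the \emph{pair} $(w_n^{-1}o,w_no)$ is not automatic either, because the two coordinates are correlated. The conclusion you need, that $d_\HH(o,w_no)-\geom(w_n)$ is tight, is true but requires a different justification: either cite it directly (in the $\SL_2(\RR)$ setting it is exactly Benoist--Quint's comparison of the Cartan and Jordan projections, i.e.\ of $\log\norm{g}$ with $\log$ of the spectral radius, which up to bounded error are $\tfrac12 d_\HH(i,gi)$ and $\tfrac12\geom(g)$), or argue as in Maher--Tiozzo/Horbez that the attracting and repelling fixed points of $w_n$ converge \emph{in distribution} to a pair with non-atomic marginals and hence are uniformly separated with high probability, forcing the axis of $w_n$ to pass within bounded distance of $o$. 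With that patch in place your argument is complete.
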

\noindent Note that conclusion $2$ above is equivalent to the statement that the distribution
\[
\frac{\geom(w)-\kappa n}{\sqrt{n}}
\]
(with law $\mu^{*n}$) converges in distribution to the Gaussian distribution with mean $0$ and variance $\nu$. In particular, this means that in the statement of conclusion $2$, the function $\psi$ can be taken to be the characteristic function of an interval. Thus, in the case that $S$ is a pair of pants---where $\cS$ can be taken to be $\{X,Y,X^{-1},Y^{-1}\}$, $\mu$ can be taken to be
\[
\frac{1}{4}\delta_{X}+\frac{1}{4}\delta_{Y}+\frac{1}{4}\delta_{X^{-1}}+\frac{1}{4}\delta_{Y^{-1}},
\]
and all elements of $\Gamma$ are hyperbolic---taking $\psi=\chi_{[a,b]}$ in Theorem~\ref{thm:main} yields Theorem~\ref{thm:allwords}.

We will prove Theorem~\ref{thm:main} by applying the theory of random walks on certain linear groups---specifically, a non-commutative CLT-type theorem for matrix products arising from a random walk, each of whose possible steps represents multiplying by a matrix corresponding to an element of $\cS$. This theory has been built by Furstenberg--Kesten~\cite{furstenberg}, Le Page~\cite{lepage}, Guivarc\cprime h--Raugi~\cite{guivarch}, Gol\cprime dshe\u\i d--Margulis~\cite{margulis}, and Benoist--Quint~\cite{cltpaper}. Due to their work, we know that if a random walk on a matrix group satisfies certain hypotheses, the logarithms of the operator norms of the resulting matrices satisfy natural probability laws, including not only the CLT, but also the Law of Large Numbers (LLN), the Large Deviations Principle (LDP), and the Local Limit Theorem (LLT). These probability laws are useful for our purposes, since the geometric length of a hyperbolic element of $\Gamma$ is precisely the logarithm of the operator norm of its corresponding $\PSL_2(\RR)$-matrix. 

We further use the CLT and LLN (specifically, the positivity of the first Lyaponuv exponent) in this context, along with a result of Gromov~\cite[Corollary 8.1.D]{gromov} (the positivity of translation lengths on hyperbolic groups), to prove the first statement of Theorem~\ref{thm:main}. This shows that the $n$-step random-walk matrices are hyperbolic with probability (limiting to) $1$ as $n\to \infty$, and thus, the probability laws on the logarithms of the operator norms of these matrices have a geometric interpretation as probability laws on the geometric lengths of their corresponding geodesics. This geometric interpretation for the CLT is precisely the second statement of Theorem~\ref{thm:main}.

\begin{note}
It has come to our attention that the statements of this paper are known---for example, they follow from Corollary 14.16 and Theorem 14.22 of~\cite{randomwalk}---and that the Chas--Li--Maskit Conjecture has recently been proven in~\cite{gtt}. 
\end{note}

\section{Random walks on linear groups}\label{sec:randomwalk}

Let $V\defeq \RR^2$ with a choice of Euclidean norm $|\cdot|$, and let $\norm{\cdot}$ denote the operator norm on $\SL(V)$.  Let $\mu$ be a Borel probability measure on $G\defeq \SL(V)$. Let $A$ denote the support of $\mu$, and $\Gamma_\mu$, the closed sub-semigroup of $G$ spanned by $A$. For nonzero $v\in V$, let $\bar v$ be the line spanned by $v$, and extend the group action of $G$ on $V$ to one on the set of lines in $V$, given by $g\bar v=\overline{gv}$. We say that a group acts \emph{strongly irreducibly} on $V$ if and only if no proper finite union of vector subspaces of $V$ is invariant under that group action. 

Suppose the following  hypotheses hold; note that  hypothesis $(1)$ is not optimal and can be weakened to a finite second moment hypothesis~\cite{cltpaper}, but suffices for our purposes.
\begin{enumerate}
\item $\int_G \norm{g}^\al d\mu(g)<\infty$ for some $\al>0$.
\item $\Gamma_\mu$ is unbounded and acts strongly irreducibly on $V$.
\end{enumerate}
By Jensen's Inequality, hypothesis $(1)$ implies that the first moment 
\[
\int_G \log \norm{g} d\mu(g) 
\]
is finite; accordingly, it follows from the submultiplicativity of $\norm{\cdot}$ that the first Lyapunov exponent 
\[
\ld_1\defeq \lim_{n \to \infty} \frac{1}{n}\int_G\log \norm{g} d\mu^{*n}(g)
\]
is finite. In the above, the $n$th convolution power $\mu^{*n}$ is a measure corresponding to the distribution of  $g=g_n\cdots g_1$ for i.i.d. random matrices $g_1,\ldots, g_n$ in $G$ chosen according to law $\mu$.

Furthermore, define the one-sided Bernoulli space $B \defeq A^{\ZZ_{>0}} \defeq\{(g_1,g_2,\ldots) : g_n \in A \text{ for all $n$}\}$, endowed with the Bernoulli probability measure $\be \defeq \mu^{\otimes \ZZ_{>0}}$.  Then, we have the following LLN-type theorem due to Furstenberg.
\begin{theorem}[{{\cite[Theorem 1.6]{randomwalk}}}]\label{thm:prob1}
Suppose  hypotheses $(1)$ and $(2)$ hold. For  $\be$-almost all $b\in B$, we have
\[
\lim_{n\to \infty }\frac{1}{n}\log \norm{g_n\cdots g_1}=\ld_1,
\]
and furthermore, $\ld_1>0$.
\end{theorem}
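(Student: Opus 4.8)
The plan is to treat the two assertions separately: the almost-sure convergence of $\frac1n\log\norm{g_n\cdots g_1}$ to $\ld_1$, which is Furstenberg--Kesten and follows from Kingman's subadditive ergodic theorem, and the strict positivity $\ld_1>0$, which is the genuinely substantial input. For the convergence, I would define on the Bernoulli space $B$ the sequence $f_n(b)\defeq \log\norm{g_n\cdots g_1}$ and let $T\colon B\to B$ be the shift $(g_1,g_2,\ldots)\mapsto(g_2,g_3,\ldots)$, which preserves $\be$ and is ergodic (indeed mixing, being a Bernoulli shift). Submultiplicativity of the operator norm gives
\[
\norm{g_{m+n}\cdots g_1}\le \norm{g_{m+n}\cdots g_{m+1}}\cdot\norm{g_m\cdots g_1},
\]
which reads $f_{m+n}\le f_m+f_n\circ T^m$, so $(f_n)$ is a subadditive cocycle. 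Since every $g\in G=\SL(V)$ satisfies $\norm g\ge 1$, we have $f_1\ge 0$, and hypothesis $(1)$ together with Jensen's inequality (as recorded in the excerpt) gives $\int_B f_1\,d\be=\int_G\log\norm g\,d\mu(g)<\infty$. Kingman's theorem then yields that $\frac1n f_n$ converges $\be$-almost surely and in $L^1$ to a $T$-invariant function equal to $\inf_n\frac1n\int_B f_n\,d\be$; ergodicity forces this limit to be the constant $\ld_1=\lim_n\frac1n\int_G\log\norm g\,d\mu^{*n}(g)$ (the equality of the infimum and the limit being Fekete's lemma), matching the definition in the excerpt. This settles the first claim.

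For positivity I would pass to the action on the projective line $\PP(V)\cong\PP^1(\RR)$. Writing $\sigma(g,x)\defeq \log\frac{|gv|}{|v|}$ for the norm cocycle (where $x=\bar v$), the first step is to produce a $\mu$-stationary probability measure $\nu$ on the compact space $\PP(V)$, i.e. one satisfying $\int_G g_*\nu\,d\mu(g)=\nu$; such $\nu$ exists by the Krylov--Bogolyubov/Markov--Kakutani argument applied to the continuous affine map $\nu\mapsto\mu*\nu$ on the weak-$*$ compact convex space of probability measures on $\PP(V)$. The second step is Furstenberg's integral formula
\[
\ld_1=\int_G\int_{\PP(V)}\sigma(g,x)\,d\nu(x)\,d\mu(g),
\]
which I would obtain by telescoping $\sigma(g_n\cdots g_1,x)=\sum_{k=0}^{n-1}\sigma(g_{k+1},g_k\cdots g_1 x)$, averaging against the stationary $\nu$, and identifying the per-step contribution.

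The heart of the matter, and the step I expect to be the main obstacle, is ruling out $\ld_1=0$. Note first that $\ld_1\ge 0$ automatically, since $\norm g\ge 1$ on $\SL(V)$, so positivity amounts to excluding equality. Suppose toward a contradiction that $\ld_1=0$. The strategy is to show this forces $\nu$ to be not merely stationary but genuinely $\Gamma_\mu$-\emph{invariant}, $g_*\nu=\nu$ for every $g$ in the support: when the mean drift vanishes, the nonnegative increments $\log|g_{k+1}\cdots g_1 v|-\log|g_k\cdots g_1 v|$ form (under the stationary chain) a martingale of mean zero, and $L^1$ martingale convergence together with a Jensen/strict-concavity rigidity upgrades stationarity to exact invariance. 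Here both parts of hypothesis $(2)$ enter decisively. Strong irreducibility rules out atoms in $\nu$, since a $\Gamma_\mu$-invariant finite set of lines of maximal mass would be a proper invariant finite union of subspaces. But an unbounded $\Gamma_\mu$ acting on $\PP^1(\RR)$ exhibits contracting ``north--south'' dynamics along any sequence $g_m$ with $\norm{g_m}\to\infty$: passing to a subsequence so that the attracting and repelling directions converge, and using that the repelling direction is $\nu$-null (non-atomicity), one gets $g_{m*}\nu\to\delta_{\mathrm{att}}$, whereas invariance gives $g_{m*}\nu=\nu$, forcing $\nu=\delta_{\mathrm{att}}$ to be atomic---a contradiction. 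Hence $\ld_1>0$. I expect the genuinely delicate point to be the passage from $\ld_1=0$ to honest invariance of $\nu$: the Furstenberg formula only controls the drift in an averaged (Cesàro) sense, and the contradiction downstream truly requires invariance rather than mere stationarity, so the martingale estimates must be carried out with care rather than merely in expectation.
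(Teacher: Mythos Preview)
The paper does not prove this theorem; it is quoted from Benoist--Quint's survey and used as a black box in the proof of Theorem~\ref{thm:main}. There is therefore no argument in the paper to compare yours against.

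For what it is worth, your sketch follows the classical Furstenberg line and is broadly sound. Two remarks. First, the increments $\sigma(g_{k+1},g_k\cdots g_1 x)=\log\dfrac{|g_{k+1}\cdots g_1 v|}{|g_k\cdots g_1 v|}$ are not nonnegative in general (only $\log\norm{g}$ is nonnegative on $\SL(V)$, not $\log(|gv|/|v|)$), so that adjective should be dropped; this does not affect the argument. Second, as you yourself flag, the passage from $\ld_1=0$ to genuine $\Gamma_\mu$-invariance of $\nu$ is the substantive step, and your description of it is heuristic rather than a proof. One clean way to make it rigorous is via the relative-entropy inequality $\int_{\PP(V)}\log\dfrac{dg_*\nu}{d\nu}\,d\nu\le 0$ with equality iff $g_*\nu=\nu$, combined with Furstenberg's formula; alternatively one can use the Furstenberg martingale $(g_1\cdots g_n)_*\nu$ and its a.s.\ limit. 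The downstream contradiction---no $\Gamma_\mu$-invariant probability on $\PP(V)$ when $\Gamma_\mu$ is unbounded and strongly irreducible, via north--south dynamics and non-atomicity---is correctly argued.
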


We also have the following CLT-type theorem for  $\log \norm{g}$.
\begin{theorem}[{{\cite[Theorem 1.7]{randomwalk}}}]\label{thm:prob2}
Suppose  hypotheses $(1)$ and $(2)$ hold. The limit
\[
\Phi\defeq \lim_{n\to \infty}\int_G (\log \norm{g}-\ld_1 n)^2 d\mu^{*n}(g)
\]
exists and is positive. For any bounded, continuous function $\psi$ on $\RR$, we have
\[
\lim_{n\to \infty}\int_G \psi\left(\frac{\log\norm{g}-\ld_1 n}{\sqrt{n}}\right)d \mu^{*n}(g)=\int_\RR \psi(s)\frac {e^{-\frac{s^2}{2\Phi}}}{\sqrt{2\pi\Phi}} ds.
\]
Equivalently, the distribution 
\[\frac{\log\norm{g}-\ld_1 n}{\sqrt{n}}
\]
(with law $\mu^{*n}$) converges in distribution to the Gaussian distribution with mean $0$ and variance $\Phi$. 
\end{theorem}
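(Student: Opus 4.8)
The plan is to realize $\log\norm{g_n\cdots g_1}$ as an additive functional of a Markov chain on the projective line $\PP(V)$ and to extract the CLT from the spectral theory of the associated transfer operator. First I would introduce the norm cocycle $\sigma : G \times \PP(V) \to \RR$ defined by $\sigma(g, \bar v) = \log\frac{|gv|}{|v|}$ for any nonzero representative $v$ of the line $\bar v$; this is well defined and satisfies the cocycle identity $\sigma(g'g, x) = \sigma(g', gx) + \sigma(g, x)$. Consequently, for the Markov chain $X_k \defeq g_k\cdots g_1 x$ on $\PP(V)$ started at $x$, one obtains the telescoping identity
\[
\log\frac{|g_n\cdots g_1 v|}{|v|} = \sum_{k=1}^n \sigma(g_k, X_{k-1}),
\]
so that the left-hand side is a Birkhoff-type sum over the chain. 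The first reduction is to show that the discrepancy $\log\norm{g_n\cdots g_1} - \log\frac{|g_n\cdots g_1 v|}{|v|}$ is negligible at the scale $\sqrt n$: this is a regularity and alignment estimate reflecting that, under hypotheses $(1)$–$(2)$, the image directions $X_{k}$ contract toward the top singular direction of the partial products almost surely, and in fact the discrepancy has uniformly bounded expectation when $v$ is sampled from the stationary measure.

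Next I would set up the transfer operators. Furstenberg's theory, using unboundedness, strong irreducibility, and $\ld_1 > 0$ from Theorem~\ref{thm:prob1}, guarantees a unique $\mu$-stationary probability measure $\nu$ on $\PP(V)$, i.e. $\mu * \nu = \nu$. For complex $z$ near $0$ I would define the family of weighted transfer operators
\[
P_z f(x) \defeq \int_G e^{z\,\sigma(g, x)} f(gx)\, d\mu(g),
\]
acting on a space of $\gamma$-H\"{o}lder functions on $\PP(V)$, so that $P_0$ is the Markov operator of the chain. The technical heart is Le Page's theorem: under $(1)$–$(2)$, $P_0$ has a spectral gap on the H\"{o}lder space, with $1$ a simple isolated eigenvalue (eigenfunction the constants, eigenmeasure $\nu$) and the remaining spectrum inside a disk of radius $<1$. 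This quasi-compactness is proved via a Lasota–Yorke (Doeblin–Fortet) inequality built from the average contraction of the projective action, which is exactly where the positivity of the Lyapunov exponent and the proximality furnished by strong irreducibility together with unboundedness in dimension two enter.

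With the spectral gap in hand I would invoke the Nagaev–Guivarc'h perturbation method. Since $z \mapsto P_z$ is analytic and $P_0$ has an isolated simple leading eigenvalue, analytic perturbation theory yields, for $z$ near $0$, a leading eigenvalue $\lambda(z)$ (analytic, $\lambda(0)=1$) with a rank-one spectral projection, so that $\int_{\PP(V)} (P_z^n \mathbf 1)\, d\nu = \lambda(z)^n (c(z) + o(1))$ with $c(z)\to 1$, while the rest of the spectrum contributes an exponentially smaller term. Evaluating at $z = it/\sqrt n$ converts this into the characteristic function of $\frac{S_n - \ld_1 n}{\sqrt n}$, where $S_n = \sum_{k=1}^n \sigma(g_k, X_{k-1})$, and a Taylor expansion of $\log\lambda(z)$ identifies $\lambda'(0) = \ld_1$ and $\lambda''(0) - \lambda'(0)^2 = \Phi$, giving $\log \mathbb{E}\,e^{it (S_n - \ld_1 n)/\sqrt n} \to -\tfrac12 \Phi t^2$. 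L\'{e}vy's continuity theorem then yields convergence in distribution to the Gaussian of variance $\Phi$, and combining this with the first reduction gives the claimed CLT for $\log\norm{g_n\cdots g_1}$.

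The main obstacle I anticipate is twofold, and both parts sit in the dynamics rather than the probability: establishing Le Page's spectral gap (the Lasota–Yorke inequality requires quantitative control of how fast the random projective action contracts, i.e. exponential convergence of $X_k$ to the boundary), and proving the strict positivity $\Phi > 0$. The latter amounts to showing that the centered cocycle $\sigma(\cdot,\cdot) - \ld_1$ is not a coboundary, i.e. cannot be written as $\phi(gx) - \phi(x)$ for a H\"{o}lder $\phi$; if it were, then $\log\norm{g_n\cdots g_1 v}$ would equal $\ld_1 n + \phi(X_n) - \phi(X_0)$ up to a bounded error, forcing bounded fluctuations and hence a rigid structure that contradicts the unboundedness and strong irreducibility in hypothesis $(2)$. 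I would isolate this non-degeneracy as the key lemma and prove it via the standard periodic-orbit criterion, exhibiting two proximal elements of $\Gamma_\mu$ whose cocycle sums over a common periodic configuration disagree.
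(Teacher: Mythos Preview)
Your outline is a faithful sketch of the Le~Page/Nagaev--Guivarc\cprime h transfer-operator proof of the CLT for products of random matrices, and it is essentially how the result is established in the cited reference. However, the paper does not prove this theorem at all: it is quoted verbatim as \cite[Theorem~1.7]{randomwalk} and used as a black box. The paper's only original work in this section is Lemma~\ref{lem:hyp}, verifying that hypotheses $(1)$ and $(2)$ hold for the specific measure $\mu$ supported on a spanning set of a non-elementary Fuchsian group, so that Theorems~\ref{thm:prob1}--\ref{thm:prob5} apply. So there is nothing to compare: your proposal reconstructs the content of the cited source, whereas the paper simply invokes it.
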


Moreover, we have the following LDP-type theorem.

\begin{theorem}[{{\cite[Theorem 1.9]{randomwalk}}}]\label{thm:prob4}
For any $t_0>0$, we have
\[
\limsup_{n\to \infty}\mu^{*n}(\{g\in G: |\log \norm{g}-\ld_1n| \ge nt_0 \})^{\frac{1}{n}} <1.
\]
\end{theorem}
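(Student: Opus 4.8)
The plan is to derive the exponential decay from the analytic dependence on $s$ of a family of transfer operators attached to the norm cocycle, and then to finish with an exponential Chebyshev inequality, since only an upper bound on the deviation probabilities is needed (there is no matching lower bound to prove, so a full G\"artner--Ellis rate function is unnecessary). Write $\sigma(g,\bar v)\defeq \log(|gv|/|v|)$ for the norm cocycle on $\PP(V)$; it satisfies $\log\norm g=\sup_{\bar v}\sigma(g,\bar v)$ and the cocycle identity $\sigma(g_1g_2,\bar v)=\sigma(g_1,g_2\bar v)+\sigma(g_2,\bar v)$. For complex $s$ in a neighborhood of $0$, define the transfer operator $P_s$ on the space of H\"older-continuous functions on the compact space $\PP(V)$ by $(P_s\vphi)(\bar v)\defeq \int_G e^{s\sigma(g,\bar v)}\vphi(g\bar v)\,d\mu(g)$. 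Hypothesis $(1)$ makes $P_s$ well-defined and analytic in $s$ for $|s|<\al$, and the cocycle identity gives the bookkeeping identity $(P_s^n\mathbf{1})(\bar v)=\int_G e^{s\sigma(g,\bar v)}\,d\mu^{*n}(g)$, with $P_0$ the Markov operator of the induced walk on $\PP(V)$.

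The analytic heart of the argument is a spectral gap for $P_0$: under hypothesis $(2)$ the induced walk on $\PP(V)$ is contracting (proximal) with a unique stationary measure, so $1$ is a simple isolated eigenvalue of $P_0$ and the rest of its spectrum lies in a disk of radius strictly less than $1$. This is the deep contraction estimate of Le Page and Guivarc'h. Granting it, analytic perturbation theory furnishes, for $s$ near $0$, a simple dominant eigenvalue $\ld(s)$ depending analytically on $s$, with $\ld(0)=1$, together with the uniform-in-$\bar v$ Laplace asymptotics $(P_s^n\mathbf{1})(\bar v)=c(s)\,\ld(s)^n(1+o(1))$ for some continuous $c(s)>0$. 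Setting $\Lambda(s)\defeq \log\ld(s)$, this function is analytic near $0$ with $\Lambda(0)=0$; differentiating the asymptotics and comparing with Theorem~\ref{thm:prob1} identifies $\Lambda'(0)=\ld_1$ (and, if one wants the sharp quadratic rate, $\Lambda''(0)=\Phi$ via Theorem~\ref{thm:prob2}).

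With $\Lambda$ in hand the rest is soft. For the upper deviations of the cocycle, fix $s>0$ and apply Chebyshev's inequality:
\[
\mu^{*n}(\{g: \sigma(g,\bar v)\ge n(\ld_1+t_0)\})\le e^{-sn(\ld_1+t_0)}\int_G e^{s\sigma(g,\bar v)}\,d\mu^{*n}(g)\le C\,e^{n(\Lambda(s)-s(\ld_1+t_0))}.
\]
Since $\Lambda(s)-s(\ld_1+t_0)=-st_0+o(s)$ as $s\to 0^+$, the exponent is strictly negative for all small $s>0$, giving exponential decay; the lower deviations are symmetric, using $s<0$. It remains to pass from $\sigma(g,\bar v)$ to $\log\norm g$. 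The lower tail transfers at once, since $\sigma(g,\bar v)\le \log\norm g$ yields $\{\log\norm g\le n(\ld_1-t_0)\}\subseteq\{\sigma(g,\bar v)\le n(\ld_1-t_0)\}$ for any fixed $\bar v$. For the upper tail one must replace a single direction by $\log\norm g=\sup_{\bar v}\sigma(g,\bar v)$; I would do this with a regularity estimate---that $\log\norm g-\sigma(g,\bar v)$ exceeds $\eps n$ only with exponentially small probability, itself a consequence of the same contraction on $\PP(V)$---so that the cocycle bound at a fixed generic $\bar v$ controls $\log\norm g$ up to an arbitrarily small loss in $t_0$. Combining the two tails gives $\limsup_{n\to\infty}\mu^{*n}(\{g:|\log\norm g-\ld_1 n|\ge nt_0\})^{1/n}<1$.

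I expect the spectral gap for $P_0$ and its analytic perturbation to be the main obstacle: once $\ld(s)$ is known to exist and be analytic, the Laplace asymptotics, the Chebyshev estimate, and the identification of $\Lambda'(0)$ are all routine, but establishing quasi-compactness of the Markov operator on a suitable H\"older space---equivalently, the exponential contraction of the walk on $\PP(V)$---is the genuinely nontrivial input underlying the whole Le Page--Guivarc'h theory. A secondary, purely technical, point is the regularity estimate needed to upgrade the single-direction upper-tail bound to one for the operator norm.
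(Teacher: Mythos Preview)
The paper does not prove this theorem at all: it is quoted verbatim as \cite[Theorem~1.9]{randomwalk} and used as a black box, so there is no proof in the paper to compare against. Your sketch is a faithful outline of the standard Le~Page/Benoist--Quint argument (complex transfer operators $P_s$ on H\"older functions on $\PP(V)$, spectral gap for $P_0$, analytic perturbation of the leading eigenvalue, exponential Chebyshev), and you correctly flag the quasi-compactness of $P_0$ as the substantive input and the passage from $\sigma(g,\bar v)$ to $\log\norm{g}$ as a secondary technical step.
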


Finally, we have the following LLT-type theorem.

\begin{theorem}[{{\cite[Theorem 1.10]{randomwalk}}}]\label{thm:prob5}
For any $a_1<a_2$, we have
\[
\lim_{n\to \infty}\sqrt{n} \mu^{*n}( \{ g\in G : \log \norm{g} -\ld_1 n \in [a_1, a_2]\})=\frac{a_2-a_1}{\sqrt{2\pi \Phi}}.
\]
\end{theorem}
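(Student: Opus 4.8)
The plan is to establish this Local Limit Theorem by the spectral (transfer-operator) method combined with Fourier inversion, which is the standard route for local limit theorems of random matrix products and which genuinely refines the Central Limit Theorem of Theorem~\ref{thm:prob2}. First I would transfer the problem to the projective line $X \defeq \PP(V)$: writing $x = \bar v$ and defining the \emph{norm cocycle} $\sigma(g,x) \defeq \log(|gv|/|v|)$, the cocycle relation $\sigma(g_n \cdots g_1, x) = \sum_{k=1}^n \sigma(g_k, g_{k-1}\cdots g_1 x)$ holds, and under hypotheses $(1)$ and $(2)$ the quantity $\log\norm{g_n\cdots g_1}$ differs from $\sigma(g_n\cdots g_1, x)$ by a bounded error that equidistributes for a typical starting direction $x$. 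This reduces the statement to a local limit theorem for the additive functional $\sigma(g_n\cdots g_1, x)$ of the Markov chain driven by $\mu$ on $X$.

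Next I would introduce the family of complex transfer operators on the Banach space $C^\gamma(X)$ of $\gamma$-Hölder functions (for a small exponent $\gamma > 0$), defined by
\[
(P_\theta f)(x) \defeq \int_G e^{i \theta \sigma(g,x)} f(gx)\, d\mu(g), \qquad \theta \in \RR,
\]
so that $P_\theta^n f(x) = \int_G e^{i\theta\sigma(g,x)} f(gx)\, d\mu^{*n}(g)$ encodes the characteristic function of $\sigma(g_n\cdots g_1, \cdot)$. At $\theta = 0$ the operator $P_0$ is the Markov averaging operator, and the key input is that it is \emph{quasi-compact with a spectral gap}: a Doeblin--Fortet (Lasota--Yorke) inequality of the form $\norm{P_0^n f}_\gamma \le C\rho^n \norm{f}_\gamma + C\norm{f}_\infty$ with $\rho < 1$, combined with the strong irreducibility in hypothesis $(2)$, shows that $1$ is a simple isolated eigenvalue (with the constants as eigenfunction and the unique $\mu$-stationary measure $\nu$ as dual eigenvector) while the rest of the spectrum lies in a disk of radius strictly less than $1$. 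Using the exponential moment in hypothesis $(1)$ to verify analyticity of $\theta \mapsto P_\theta$, analytic perturbation theory then yields, for $|\theta|$ small, a dominant simple eigenvalue $\ld(\theta)$ with $\ld(0) = 1$ and the expansion $\ld(\theta) = \exp\!\big(i\ld_1 \theta - \tfrac{1}{2}\Phi\,\theta^2 + o(\theta^2)\big)$, whose linear and quadratic coefficients recover precisely the drift $\ld_1$ and the variance $\Phi$ of Theorem~\ref{thm:prob2}.

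The hard part, and the feature that genuinely separates a local limit theorem from the ordinary CLT, will be controlling $P_\theta$ for $\theta$ \emph{away} from the origin. I would need to show that for every $\theta$ in a compact subset of $\RR \setminus \{0\}$ the spectral radius of $P_\theta$ on $C^\gamma(X)$ is strictly less than $1$; equivalently, that $P_\theta$ carries no eigenvalue of modulus $1$. An eigenvalue $e^{i\phi}$ of modulus $1$ would force a cohomological equation $e^{i\theta\sigma(g,x)} f(gx) = e^{i\phi}f(x)$ holding $\mu$-almost surely with $|f| \equiv 1$, which amounts to saying that the norm cocycle $\sigma$ is \emph{arithmetic}, i.e. cohomologous to a cocycle valued in a proper closed subgroup of $\RR$. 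Ruling this out is exactly a non-arithmeticity statement: because hypothesis $(2)$ forces $\Gamma_\mu$ to be Zariski dense in $\SL(V) = \SL_2(\RR)$, there exist enough loxodromic elements whose translation lengths (i.e. the logarithms of their dominant eigenvalues) generate a dense, hence non-discrete, subgroup of $\RR$, contradicting arithmeticity. This is where the real geometric content of the group enters and is the main obstacle.

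Finally, with the decomposition $P_\theta^n = \ld(\theta)^n \Pi_\theta + N_\theta^n$ for $|\theta| \le \delta$ (where $\Pi_\theta$ is the rank-one spectral projection and the spectral radius of $N_\theta$ is bounded away from $1$), together with the uniform spectral-radius bound on $\delta \le |\theta| \le T$, I would conclude by Fourier inversion following Stone's method. Approximating $\chi_{[a_1,a_2]}$ from above and below by functions $h$ whose Fourier transforms are supported in a fixed interval $[-T,T]$, I would write $\int h(\log\norm{g} - \ld_1 n)\, d\mu^{*n}(g)$ as $\frac{1}{2\pi}\int_{-T}^{T} \hat h(\theta)\, e^{i\ld_1 n\theta}\,\langle P_\theta^n \mathbf 1, \nu\rangle\, d\theta$, up to the bounded norm-to-cocycle correction. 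The band $|\theta| \le \delta$ contributes the Gaussian density after the scaling $\theta = u/\sqrt n$, while $\delta \le |\theta| \le T$ is exponentially negligible by the non-arithmeticity estimate of the previous step. Since the $\sqrt n$ normalization is exactly the reciprocal width of the diffusive Gaussian window, the limit is the value of the Gaussian density at its center times the length of the interval, namely $\frac{a_2 - a_1}{\sqrt{2\pi\Phi}}$, and letting the approximants $h$ converge to $\chi_{[a_1,a_2]}$ completes the argument.
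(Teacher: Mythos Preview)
Your sketch is a faithful outline of the spectral/transfer-operator proof of the Local Limit Theorem for products of random matrices --- the complex perturbation $P_\theta$ of the Markov operator, quasi-compactness and spectral gap at $\theta=0$, the non-arithmeticity argument to control $|\theta|$ bounded away from $0$, and Stone's Fourier-inversion scheme are exactly the ingredients in Le~Page's original argument and in the Benoist--Quint treatment that the paper cites.

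However, you should be aware that the paper does \emph{not} prove this theorem at all. Theorem~\ref{thm:prob5}, like Theorems~\ref{thm:prob1}--\ref{thm:prob4}, is simply quoted verbatim from the survey~\cite{randomwalk} as a black-box input; the paper's own contribution in Section~\ref{sec:mainproof} is only to verify that hypotheses~$(1)$ and~$(2)$ hold for the specific measure $\mu$ supported on a spanning set of a non-elementary Fuchsian group (Lemma~\ref{lem:hyp}), and then to show that the non-hyperbolic locus has vanishing measure so that the probability laws on $\log\norm{g}$ translate into laws on geometric length. So there is nothing to compare: you have sketched the proof that lives in the cited reference, while the paper itself treats the statement as an established result and moves on.
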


\begin{remark}
The survey \textit{Random walks on reductive groups}~\cite{randomwalk} by Benoist--Quint initially gives Theorems~\ref{thm:prob1} to \ref{thm:prob5} as statements about the random-walk distribution of $\log |gv|$ for an arbitrary $v\in V\setminus \{0\}$. However, the analogous probability laws  for  $\log \norm{g}$ can be easily deduced from those for $\log |gv|$ by a renormalization, as stated in~\cite[p. 16]{randomwalk}. 
\end{remark}

\section{Proof of Theorem~\ref{thm:main}}\label{sec:mainproof}

In our setting,  $\Gamma\backslash \HH$ has no orbifold points by assumption, so $\Gamma$ is torsion-free. Furthermore, since $\Gamma$ is non-elementary, it contains a free subgroup $F \cong F_2$ comprised entirely of hyperbolic matrices (see, for instance,~\cite[Proposition 3.1.2]{hubbard}). Let $X,Y \in \SL_2(\RR)$ be two  matrices such that $\overline{X}$ and $\overline{Y}$ (where placing a bar above an $\SL_2(\RR)$ matrix denotes its $\PSL_2(\RR)$ equivalence class) freely generate $F$. Suppose $\cS = \{\overline{Z_1},\ldots, \overline {Z_k}\}$, where $\overline{Z_1},\ldots, \overline {Z_k}$ are all  distinct. Correspondingly, let $A=\{Z_1,\ldots,Z_k\}$.  We wish to prove probability laws for the distribution of geometric lengths from $H_n$, when sampling by a random walk with law given by a probability measure
\[
\sum_{j=1}^k  c_j \del_{Z_j}
\]
for arbitrary positive constants  $ c_1,\ldots, c_k$  that add to $1$. In order to apply the theorems introduced in Section~\ref{sec:randomwalk}, we need to show that $H_n \subseteq W_n$ approaches full measure as $n\to \infty$, as well as verify hypotheses $(1)$ and $(2)$ for our setting.

\begin{lemma}\label{lem:hyp}
Hypotheses $(1)$ and $(2)$ hold for $\mu$.
\end{lemma}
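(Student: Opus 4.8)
The plan is to dispatch hypothesis $(1)$ immediately and then spend the bulk of the argument on the two parts of hypothesis $(2)$. Since $\mu=\sum_{j=1}^k c_j\delta_{Z_j}$ is supported on the finite set $A=\{Z_1,\dots,Z_k\}$, for every $\al>0$ we have $\int_G \norm{g}^\al\, d\mu(g)=\sum_{j=1}^k c_j\norm{Z_j}^\al<\infty$, a finite sum of finite terms; so hypothesis $(1)$ holds with any $\al>0$. The remaining work is to show that $\Gamma_\mu$, the closed sub-semigroup of $\SL_2(\RR)$ generated by $A$, is unbounded and acts strongly irreducibly on $V=\RR^2$.

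The key structural observation, used for both parts, is that the image of $\Gamma_\mu$ under the projection $\SL_2(\RR)\to\PSL_2(\RR)$ is all of $\Gamma$. Indeed, this image contains the sub-semigroup of $\PSL_2(\RR)$ generated by $\cS=\{\overline{Z_1},\dots,\overline{Z_k}\}$, which by the defining property of a spanning set has multiplicative closure equal to $\Gamma$; since the projection is continuous and $\Gamma$ is discrete, hence closed, taking the topological closure adds nothing, so the projection of $\Gamma_\mu$ is exactly the \emph{group} $\Gamma$. For unboundedness, note that because $\Gamma$ is non-elementary it contains a hyperbolic element $h$ (for instance $\overline X\in F$), whose positive translation length gives $\norm{h^n}\to\infty$ as $n\to\infty$. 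As $\Gamma$ is a group, each $h^n$ lies in the projected semigroup, so $\Gamma_\mu$ contains a lift of $h^n$ of the same operator norm for every $n$; thus $\Gamma_\mu$ is unbounded.

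For strong irreducibility, I would identify the proper nonzero subspaces of $V=\RR^2$---the only candidates for an invariant proper finite union---with the points of $\PP^1(\RR)$, canonically the boundary $\partial\HH$ of the $\PSL_2(\RR)$-action. Suppose a finite union of lines $U=\bar v_1\cup\cdots\cup\bar v_m$ were $\Gamma_\mu$-invariant. Since every $g\in\Gamma_\mu$ is invertible, it carries $U$ onto a union of exactly $m$ distinct lines, so $gU\subseteq U$ forces $gU=U$; that is, $\Gamma_\mu$ \emph{permutes} the finite line-set $\{\bar v_1,\dots,\bar v_m\}$. The stabilizer of this set in $\PSL_2(\RR)$ is a subgroup containing the projection of $\Gamma_\mu$, which is all of $\Gamma$; hence $\Gamma$ stabilizes the corresponding finite set of points in $\partial\HH$. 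A Fuchsian group with a finite orbit on $\partial\HH$ is elementary, contradicting our hypothesis. (Equivalently, one could note that the hyperbolic pair $\overline X,\overline Y$ generating $F\cong F_2$ cannot share any finite invariant boundary set.) Hence no such $U$ exists and $\Gamma_\mu$ acts strongly irreducibly.

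I expect the only genuinely delicate point to be this transfer from the semigroup $\Gamma_\mu$ to the group $\Gamma$: semigroup-invariance of $U$ must first be promoted to a \emph{permutation} of a finite line-set (via invertibility and the cardinality count) before one can pass to the group stabilizer and invoke non-elementariness. The finite-support estimate for hypothesis $(1)$ and the norm-growth of powers of a hyperbolic element are routine.
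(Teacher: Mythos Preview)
Your proof is correct. For hypothesis~$(1)$ and the unboundedness half of~$(2)$ you do essentially what the paper does, though you make explicit (and justify carefully) the passage from the semigroup $\Gamma_\mu\subset\SL_2(\RR)$ to the group $\Gamma\subset\PSL_2(\RR)$, which the paper leaves implicit when it invokes powers of $X$.

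For strong irreducibility the two arguments genuinely diverge. The paper works concretely: since $\overline X,\overline Y$ are non-commuting hyperbolic elements, their four eigenlines are distinct, so any line $\ell$ misses the axis pair of at least one of them, say $X$; then the iterates $X^n\ell$ form an infinite set (they converge to the attracting eigenline without ever reaching it), so they escape any prescribed finite set $L$. You instead promote a hypothetical invariant finite union of lines to a finite $\Gamma$-invariant subset of $\partial\HH$ and invoke the standard dichotomy that a Fuchsian group with a finite boundary orbit is elementary. Your route is cleaner and more conceptual---it uses only the hypothesis that $\Gamma$ is non-elementary and never touches the specific elements $X,Y$---while the paper's route is more self-contained, requiring no outside characterization of elementary groups. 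Your care in turning semigroup-invariance $gU\subseteq U$ into a genuine permutation of the line set (via invertibility and a cardinality count) is exactly the right bridge; the paper sidesteps this issue by exhibiting an infinite orbit directly.
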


\begin{proof}
Hypothesis $(1)$ is clear. The first part of hypothesis $(2)$ is clear; indeed, $X$ is hyperbolic, and thus $X^n$ is unbounded as $n\to \infty$. Furthermore, $\Gamma_\mu$ acts strongly irreducibly on $V$. Indeed, because $X$ and $Y$ do not commute, the major and minor axes of $X$ and $Y$ correspond to four distinct lines in $V$. Thus, given a line  $\ell$ in $V$, without loss of generality, we can assume that $\ell$ is not equal to either the major or the minor axis of $X$ (of $Y$, for the other case). Then, for any set $L$ of finitely many lines in $V$, there exists sufficiently large $n$ so that $X^n \ell \notin L$. 
\end{proof}

The above lemma proves Theorems~\ref{thm:prob1} to \ref{thm:prob5} for the law $\mu$. In particular, Theorem~\ref{thm:prob2} holds, demonstrating the CLT-type statement regarding $\log \norm{g}$ for $g \in A^n$, with mean $\ld_1$ and variance $\Phi$. However, only conjugacy classes of \emph{hyperbolic} matrices $g$ have a well-defined notion of geometric length, which is then given by $\log \norm{g}$. It still remains to show that  $H_n \subseteq W_n$ approaches full measure as $n\to \infty$. For  the sake of adherence to our most recent notation, let $\underline{H_n}$ denote the subset of $A^n$ comprised of elements that are hyperbolic in $\SL_2(\RR)$.

\begin{lemma}
We have that as $n\to \infty$, the measure of  $A^n \setminus \underline{H_n}$ limits to $0$.
\end{lemma}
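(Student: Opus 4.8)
The plan is to separate the two ways in which a matrix $g\in A^n$ can fail to be hyperbolic in $\SL_2(\RR)$. Since $\Gamma$ is discrete and torsion-free it has no nontrivial elliptic elements, so if $g$ projects to $\bar g\in\Gamma$ then $g$ is non-hyperbolic precisely when $\bar g=1$ (that is, $g=\pm I$) or $\bar g$ is parabolic; equivalently, $A^n\setminus\underline{H_n}=\{g\in A^n:|\Tr(g)|\le 2\}$. I would first record this trichotomy and then bound the trivial and parabolic loci separately. The trivial locus is immediate from Theorem~\ref{thm:prob1}: since $\tfrac1n\log\norm{g_n\cdots g_1}\to\ld_1>0$ almost surely, $\log\norm{g}\to\infty$, so the event $g=\pm I$ has $\mu^{*n}$-measure tending to $0$.

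The substance of the lemma is the parabolic locus, and this is where I expect the main obstacle to lie, because a parabolic matrix can have arbitrarily large operator norm: the operator-norm growth furnished by Theorems~\ref{thm:prob1}–\ref{thm:prob2} does not by itself force hyperbolicity. The correct invariant to control is the spectral radius, since $g$ is hyperbolic iff it exceeds $1$, i.e.\ iff $\lim_{m\to\infty}\tfrac1m\log\norm{g^m}>0$, whereas a parabolic $g$ has this limit equal to $0$. Using the displacement estimate $d_\HH(o,\bar g\cdot o)=2\log\norm{g}+O(1)$, the LLN says the walk moves into $\HH$ at linear speed $2\ld_1$. I would combine this with Gromov's positivity of translation lengths~\cite[Corollary 8.1.D]{gromov}, which, viewing $\Gamma$ as a Gromov-hyperbolic group, bounds the stable translation length of every nontrivial element below by a uniform $\eps_0>0$, and with a proximality (contraction) criterion for $\SL_2(\RR)$: writing the Cartan decomposition $g=k_g a_g \ell_g$, the attracting and repelling directions $x_g^{+},x_g^{-}\in\partial\HH$ converge almost surely to distinct limits by strong irreducibility (hypothesis $(2)$), hence are uniformly separated with probability tending to $1$; together with the diverging singular-value gap $\norm{g}^2\to\infty$ this forces $g$ to be proximal, thus hyperbolic.

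The genuinely delicate point is the cusped case, in which the orbit map $\Gamma\to\HH$ is exponentially distorted along the parabolic subgroups, so that a parabolic element can realize linear displacement $2\ld_1 n$ while having algebraic length at most $n$. This is exactly why a crude comparison of geometric displacement against word length cannot rule out parabolics, and why one must instead invoke the separation of the boundary directions $x_g^{\pm}$—equivalently, the non-atomicity of the limiting boundary measure on $\partial\HH$, which assigns measure zero to the countable set of parabolic fixed points—to certify proximality. Gromov's bound finishes the job by guaranteeing that the hyperbolic elements produced this way have geometric length $\geom(\bar g)=\log\norm{g}$ bounded below by a positive constant, which is what allows the operator-norm CLT of Theorem~\ref{thm:prob2} to be read as a statement about geometric lengths.
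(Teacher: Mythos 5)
Your route is genuinely different from the paper's, and your diagnosis of where the difficulty lies is in fact sharper than the paper's own treatment of it. The paper argues combinatorially: using Gromov's positivity of translation lengths \cite{gromov} together with \v{S}varc--Milnor, it shows that each primitive parabolic class $\cC_j$ has $s_j>0$, where $s_j$ is the infimal ratio of symbolic length to the exponent $a$ over $\cS$-words lying in $\cC_j^a$; hence a parabolic $g\in A^n$ lies in $\cC_j^a$ with $a\le n/s_j$, from which the paper concludes $\log\norm{g}\le C_j\log n$ and then kills the event $\{\log\norm{g}\le C_j\log n\}$ with the lower Gaussian tail of Theorem~\ref{thm:prob2}. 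The obstacle you flag---``a parabolic matrix can have arbitrarily large operator norm''---is precisely where that argument is deficient: the operator norm is not a class function, so membership in $\cC_j^a$ with $a\le n/s_j$ does \emph{not} bound $\norm{g}$ by the norm of the standard unipotent representative. Concretely, if $u\in\cC_j$ and $h\in\Gamma$ is hyperbolic (necessarily not sharing a fixed point with $u$, by discreteness), then $g=h^mu h^{-m}$ is parabolic, lies in $A^n$ for $n\asymp m$, and has $\log\norm{g}$ growing \emph{linearly} in $n$. Your proximality argument---diverging singular-value gap plus separation of the attracting direction from the repelling line, the separation coming from non-atomicity of the limiting boundary laws---is the standard and correct way around this, and it is essentially the mechanism behind the Benoist--Quint results acknowledged in the paper's Note (Corollary 14.16 and Theorem 14.22 of \cite{randomwalk}): it controls the spectral radius (equivalently, hyperbolicity) rather than the norm, which is exactly what the lemma requires.

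Two points in your sketch need repair, though neither is fatal. First, for the one-sided product $g_n\cdots g_1$ the attracting and repelling directions do \emph{not} both converge almost surely: one of them converges a.s.\ (by the nested-images argument, applied to the walk read in the appropriate order), while the other converges only in law. What is true, and what suffices for a statement about $\mu^{*n}$-measure, is that the joint law of the pair converges to a product of two non-atomic measures; this gives $d(x_g^+,x_g^-)\ge 2\eps$ with probability at least $1-\delta$ for all large $n$, which combined with $\norm{g}^2\to\infty$ certifies proximality with probability tending to $1$. (Relatedly, the relevant use of non-atomicity is this separation of $x_g^+$ from $x_g^-$, not the avoidance of the countable set of parabolic fixed points.) Second, Gromov's theorem does no actual work in your argument---proximality follows from the contraction criterion alone---and your closing sentence is off target: a uniform positive lower bound on stable translation lengths in the word metric does not bound $\geom$ below by a constant, and no such bound is needed for this lemma, which asserts only that the non-hyperbolic locus has $\mu^{*n}$-measure tending to $0$.
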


\begin{proof}
We need to show that the subset of $g \in A^n$ that are  non-hyperbolic (i.e., parabolic, since $\Gamma$ is torsion-free) as $\SL_2(\RR)$-matrices has measure going to $0$ as $n \to \infty$. There are finitely many primitive parabolic conjugacy classes $\cC_1,\ldots, \cC_m$ in $\Gamma$;  each $\cC_j$ can be $\PSL_2(\RR)$-conjugated to 
\[
\overline{\begin{pmatrix}
1 & t_j \\
0 & 1
\end{pmatrix}}.
\]
A parabolic conjugacy class of $\Gamma$ is precisely a power of one such class $\cC_j$ (with the caveat that the identity conjugacy class is the trivial power of any $\cC_j$). For each $\cC_j$, define
\[
s_j = \inf_{\text{$\cS$-word $w$  contained in some nontrivial power $\cC_j^a$}} \frac{\text{symbolic length of $w$ in $\cS$}}{a}.
\]
Suppose for the sake of a contradiction that $s_j=0$. Then, there would be  primitive $\cS$-words $\{w_i\}_{i\in \ZZ_{> 0}}$, each of which is contained in some nontrivial power   $\cC_j^{a_i}$,  such that
\[
\frac{\ell_\cS(w_i)}{a_i} 
\]
monotonically decreases to $0$, since the algebraic length $\ell_\cS(w_i)$ is upper-bounded by the symbolic length. We then have  that $w_1^{a_i}$ is conjugate to $w_i^{a_1}$ for all $i \in \ZZ_{>0}$. This implies that the \emph{translation length}~\cite[p. 146]{tlength} of $w_1$, defined by 
\[
\liminf_{u\to \infty}\frac{\ell_{\cS}(w_1^u)}{u},
\]
is $0$. However, it is a result of Gromov~\cite[Corollary 8.1.D]{gromov} that the translation length of an infinite-order element of a hyperbolic group is nonzero, and by the \v{S}varc--Milnor Lemma~\cite{svarc,milnor}, any finitely-generated Fuchsian group $\Gamma$ is hyperbolic. This contradiction shows that $s_j>0$. 

Thus, for  $g \in A^n$ that are parabolic as $\SL_2(\RR)$-matrices, say contained in a power of $\cC_j$, we have that $\norm{g}$ is at most 
\[
\norm{\begin{pmatrix}
1 & \frac{n}{s_j}t_j \\
0 & 1
\end{pmatrix}},
\]
using the bound that the exponent of the power of $\cC_j$ in which $g$ is contained is $\le n/s_j$. It is well-known that 
\[
\norm{\begin{pmatrix}
1 & x \\
0 & 1
\end{pmatrix}}
\]
grows like a polynomial in $|x|$ as $|x| \to \infty$. Consequently, for all $n>1$ (so that $\log n > 0$), we have
\[
\log \norm{g} \le C_j \log n
\]
for some constant $C_j > 0$.

Let $\eps>0$ be arbitrary. Fix $\alpha>0$ large enough so that
\[
\int_{-\infty}^{-\alpha}\frac {e^{-\frac{s^2}{2\Phi}}}{\sqrt{2\pi\Phi}} ds \le \frac{\eps}{2}.
\]
Using the positivity of $\ld_1$ given by Theorem~\ref{thm:prob1}, there exists $N_1>0$  so that for all $n>N_1$, 
\[
\frac{\ld_1 n-C_j \log n }{\sqrt n}\ge \alpha.
\]
Next, an analogous discussion to that of Section~\ref{sec:intro} demonstrates that the statement of  Theorem~\ref{thm:prob2} holds for $\psi=\chi_{(-\infty,-\al]}$. Thus, there exists $N_2>0$ such that for all $n>N_2$,
\[
\int_{G} \chi_{(-\infty,-\al]} \left(\frac{\log\norm{g}-\ld_1n}{\sqrt{n}} \right)d \mu^{*n}(g)
\]
is within $\eps/2$ of 
\[
\int_{-\infty}^{-\alpha}\frac {e^{-\frac{s^2}{2\Phi}}}{\sqrt{2\pi\Phi}} ds.
\]
Applying the triangle inequality, we conclude that for $n>\max(N_1,N_2)$, 
\begin{align*}
&\int_{G}\chi_{\left(-\displaystyle\infty, \displaystyle\frac{C_j \log n-\ld_1n }{\sqrt n}\right]}\left(\frac{\log\norm{g}-\ld_1n}{\sqrt{n}}\right) d \mu^{*n}(g) \\
\le &\int_{G}\chi_{(-\infty, -\al]}\left(\frac{\log\norm{g}-\ld_1n}{\sqrt{n}}\right) d \mu^{*n}(g) \\
\le & \left|\int_{G}\chi_{(-\infty, -\al]}\left(\frac{\log\norm{g}-\ld_1n}{\sqrt{n}}\right) d \mu^{*n}(g)-\int_{-\infty}^{-\alpha}\frac {e^{-\frac{s^2}{2\Phi}}}{\sqrt{2\pi\Phi}} ds\right|+\int_{-\infty}^{-\alpha}\frac {e^{-\frac{s^2}{2\Phi}}}{\sqrt{2\pi\Phi}} ds \\
\le &\frac{\eps}{2}+\frac{\eps}{2} = \eps.
\end{align*}
This completes the proof that the subset of $A^n$ comprised of elements that,  as $\SL_2(\RR)$-matrices, are contained in powers of $\cC_j$ approaches zero measure as $n\to \infty$. Since there are finitely many $\cC_j$, we have proven our claim.
\end{proof}

It follows that Theorems  \ref{thm:prob1} to \ref{thm:prob5} yield the LLN, CLT, LDP, and LLT  for the distribution of geometric lengths when sampling from $\cS$-words of  length $n$ that are hyperbolic in $\Gamma$, with law $\mu^{*n}$. This is with the caveat that for Theorem~\ref{thm:prob1}, while the subset of $b\in B$ such that $g_n\ldots g_1$ is non-hyperbolic for all $n$ is not necessarily zero measure, the theorem statement relates to geometric length in the following way. The subset of $b\in B$ for which there exists $N_b$ such that $g_n\ldots g_1$ is non-hyperbolic for all $n>N_b$ has zero measure. This is because 
\[
\{b\in B: g_n\ldots g_1 \text{ is non-hyperbolic for all $n > j$} \}
\]
has  zero measure for any $j\ge 0$, so 
\[
\bigcup_{j\ge 0}\{b\in B : g_n\ldots g_1 \text{ is non-hyperbolic for all $n > j$}\}
\]
 also has zero measure. Thus, for almost all $b \in B$, there exists an increasing sequence of positive integers $\{n_{b,j}\}_{j\in \ZZ_{>0}}$ such that $g_{n_{b,j}}\cdots g_1$ is hyperbolic for all $j$, and 
 \[
 \lim_{j \to \infty} \frac{1}{n_{b,j}} \log\norm{g_{n_{b,j}} \cdots g_1} =\lambda_1.
 \]
 
 \section{Concluding remarks}
 
In this section, we describe some potential future directions of inquiry. An obvious such direction is Conjecture~\ref{conj:chas} for $S$ a pair of pants, which perhaps could be proven by a method inspired by the theory of random walks on a uniformizing Fuchsian group. Another line of inquiry is given by the Law of the Iterated Logarithm (LIL) for random walks on matrix groups.
\begin{theorem}[{{\cite[Theorem 1.8]{randomwalk}}}]\label{thm:prob3}
Suppose  hypotheses $(1)$ and $(2)$ hold. For $\be$-almost all $b \in B$, the set of limit points of 
\[
\left\{\frac{\log \norm{g_n \cdots g_1} -\ld_1 n}{\sqrt{2\Phi n \log \log n}} : n\in \ZZ_{>0}\right\}
\]
is $[-1,1]$.
\end{theorem}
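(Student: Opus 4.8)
The plan is to recognize the centered log-norm $\log\norm{g_n\cdots g_1}-\ld_1 n$ as (up to negligible error) a \emph{Birkhoff sum over a Markov chain on the projective line}, to replace that sum by a martingale via Gordin's method, and then to invoke the Law of the Iterated Logarithm for martingales, whose normalization matches $\sqrt{2\Phi n\log\log n}$ precisely because $\Phi$ is the martingale's asymptotic variance. First I would introduce the norm cocycle $\sigma(g,\bar v)=\log\bigl(|gv|/|v|\bigr)$ on $G\times \PP(V)$, where $\PP(V)$ is the projective line. By the cocycle identity, for any fixed direction $x_0$ we have $\sigma(g_n\cdots g_1,x_0)=\sum_{k=1}^n \sigma(g_k,X_{k-1})$, where $X_k=(g_k\cdots g_1)x_0$ is the left random walk on $\PP(V)$, a Markov chain with operator $Pf(x)=\int_G f(gx)\,d\mu(g)$ and natural filtration $\cF_k$ generated by $g_1,\dots,g_k$. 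The comparison between $\log\norm{g_n\cdots g_1}$ and $\sigma(g_n\cdots g_1,x_0)$ is exactly the renormalization step recorded in the Remark following Theorem~\ref{thm:prob5}: using the contraction of the chain toward its top singular direction, this difference is tight (it has exponentially small tails), hence is $o(\sqrt{n\log\log n})$ almost surely and does not affect the LIL. Thus it suffices to prove the statement for the Birkhoff sum.

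The dynamical engine, and the step I expect to be the main obstacle, is a \emph{spectral gap} for $P$. Hypothesis $(2)$ (unbounded and strongly irreducible) forces the existence of a proximal element, so the chain genuinely contracts on $\PP(V)$; by Furstenberg's theorem there is a unique $\mu$-stationary measure $\nu$, and $\int_{\PP(V)} \tau\,d\nu=\ld_1$ where $\tau(x)\defeq\int_G \sigma(g,x)\,d\mu(g)$. The substantial input is the Le~Page--Guivarc\cprime h theorem: combining proximality with the exponential moment of hypothesis $(1)$, the operator $P$ acts with a spectral gap on a Banach space of Hölder-continuous functions on $\PP(V)$. Establishing these contraction and regularity estimates is the heart of the matter and is where all of hypotheses $(1)$ and $(2)$ are genuinely used.

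Granting the spectral gap, I would carry out Gordin's martingale--coboundary decomposition. Since $\tau-\ld_1$ integrates to zero against $\nu$, the spectral gap lets me solve the Poisson equation $(I-P)\phi=\tau-\ld_1$ by $\phi=\sum_{m\ge 0}P^m(\tau-\ld_1)$, with $\phi$ Hölder and in particular bounded. Writing $D_k\defeq \sigma(g_k,X_{k-1})-\tau(X_{k-1})+\phi(X_k)-P\phi(X_{k-1})$, one checks that each $D_k$ is an $\cF_k$-martingale difference and that
\[
\sum_{k=1}^n\bigl(\sigma(g_k,X_{k-1})-\ld_1\bigr)=M_n+\phi(X_0)-\phi(X_n),\qquad M_n\defeq\sum_{k=1}^n D_k .
\]
The coboundary $\phi(X_0)-\phi(X_n)$ is bounded, hence negligible for the LIL, and the increments $D_k$ inherit finite exponential conditional moments from the exponential moment on $\sigma$ and the boundedness of $\phi$.

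Finally I would apply the martingale LIL. Starting the chain from the stationary measure $\nu$ makes $(X_k)$ stationary and ergodic, so by Birkhoff's ergodic theorem the predictable quadratic variation satisfies $\tfrac1n\langle M\rangle_n=\tfrac1n\sum_{k=1}^n \mathbb{E}\bigl[D_k^2\mid \cF_{k-1}\bigr]\to \Phi$ almost surely, where $\Phi$ is precisely the positive variance furnished by Theorem~\ref{thm:prob2}. Stout's Law of the Iterated Logarithm for martingales with these conditional-variance and moment conditions then yields that the set of limit points of $M_n/\sqrt{2\Phi n\log\log n}$ is $[-1,1]$ almost surely; transferring from the stationary start to an arbitrary fixed $x_0$ uses the mixing of the chain, and adding back the $o(\sqrt{n\log\log n})$ coboundary and norm-versus-cocycle errors gives the same conclusion for $\log\norm{g_n\cdots g_1}-\ld_1 n$. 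The positivity $\Phi>0$ from Theorem~\ref{thm:prob2} is exactly what prevents the limit set from collapsing to $\{0\}$ and delivers the full interval $[-1,1]$. (An equivalent and perhaps cleaner endgame is to upgrade the martingale approximation to an almost-sure invariance principle, coupling $M_n$ with a Brownian motion $\sqrt{\Phi}\,W(n)$ up to error $o(\sqrt{n\log\log n})$, after which the result is immediate from Khinchin's LIL for Brownian motion.)
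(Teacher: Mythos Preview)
The paper does not supply its own proof of this theorem: it is quoted verbatim as \cite[Theorem~1.8]{randomwalk} and used as a black box, with the only work in the paper being the verification (via Lemma~\ref{lem:hyp}) that hypotheses~$(1)$ and~$(2)$ hold in the Fuchsian setting. So there is no ``paper's proof'' to compare against.

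That said, your outline is a faithful sketch of the proof that actually appears in the cited source. Benoist--Quint (following Le~Page) establish the LIL exactly by passing to the norm cocycle $\sigma$ on $G\times\PP(V)$, using the exponential moment and strong irreducibility/proximality to obtain a spectral gap for the transfer operator on H\"older functions, solving the Poisson equation to write the centered cocycle as a martingale plus a bounded coboundary, and then invoking a martingale LIL (or, equivalently, an almost-sure invariance principle). The identification of the LIL normalizer with the CLT variance $\Phi$ and the renormalization from $\sigma(g,x_0)$ to $\log\norm{g}$ are handled just as you describe. Your sketch is correct in spirit; the only places where genuine work hides are the contraction/regularity estimates that give the spectral gap and the passage from a stationary initial law to a fixed starting point, both of which you flag but do not carry out.
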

\noindent Our proof of Lemma~\ref{lem:hyp} demonstrates that the conclusion of Theorem~\ref{thm:prob3} holds. However, this statement is about  norms of \emph{all}  matrices in a specified Fuchsian-group random-walk path $b\in B$, not just the norms of hyperbolic such matrices, for which the norm has a geometric meaning: the geometric length of the corresponding closed geodesic. In order to give our desired geometric interpretation of Theorem~\ref{thm:prob1}, we showed that for any $b\in B$,  we can take an increasing subsequence $\{n_{j}\}_{j\in \ZZ_{>0}}$ of $\ZZ_{>0}$ for which the matrices $g_{n_{j}}\cdots g_1$ are hyperbolic, so that the LLN-type statement held for this subsequence. We would also like a geometric interpretation of Theorem~\ref{thm:prob3}, which begs the question: can one take such a subsequence so that the set of limit points of 
\[
\left\{\frac{\log \norm{g_{n_j} \cdots g_1} -\ld_1 n_j}{\sqrt{2\Phi n_j \log \log n_j}}: j\in \ZZ_{>0}\right\}
\]
demonstrates a LIL-type behavior? We conjecture that the answer is yes, given that  LIL-type statements for subsequences are known~\cite{lil1,lil2,lil3,lil4}.

\section*{Acknowledgments}

\noindent  This exposition was supported by the National Science Foundation Graduate Research Fellowship Program (grant number DGE1745303). The author would like to thank Alex Eskin and Alex Wright for helpful conversations, and the anonymous referee for directing us to \cite[Corollary 14.16 and Theorem 14.22]{randomwalk}.

 \bibliographystyle{amsplain}
\bibliography{main}

\end{document}